\documentclass{amsart}
\usepackage{latexsym,amssymb,enumerate, amsmath}
\usepackage{stmaryrd}
\usepackage{wasysym}

\newtheorem{theorem}{Theorem}[section]

\newtheorem{lemma}[theorem]{Lemma}

\theoremstyle{definition}

\newtheorem*{thmA}{Theorem A}
\newtheorem*{Coro}{Corollary B}

\newenvironment{enumeratei}{\begin{enumerate}[\upshape (a)]}
    {\end{enumerate}}

\def\irr#1{{\rm Irr}(#1)}
\def\cent#1#2{{\bf C}_{#1}(#2)}

\def\syl#1#2{{\rm Syl}_#1(#2)}

\def\nor{\trianglelefteq\,}
\def\oh#1#2{{\bf O}_{#1}(#2)}
\def\zent#1{{\bf Z}(#1)}
\def\sbs{\subseteq}

\def\fit#1{{\bf F}(#1)}
\def\frat#1{{\bf \Phi}(#1)}

\def\frat#1{{\bf \Phi}(#1)}
\def\fitd#1{{\bf F}_{2}(#1)}
\def\irr#1{{\rm Irr}(#1)}

\def\cent#1#2{{\bf C}_{#1}(#2)}

\def\syl#1#2{{\rm Syl}_#1(#2)}
\def\nor{\trianglelefteq\,}

\def\oh#1#2{{\bf O}_{#1}(#2)}
\def\Oh#1#2{{\bf O}^{#1}(#2)}
\def\zent#1{{\bf Z}(#1)}
\def\sbs{\subseteq}

\def\alt#1{{\rm Alt}(#1)}

\def\fit#1{{\bf F}(#1)}
\def\GF#1{{\rm GF}(#1)}

\def\V#1{{\rm V}(#1)}
\def\E#1{{\rm E}(#1)}

\def\irr#1{{\rm Irr}(#1)}

\def\cd#1{{\rm cd}(#1)}

\def\cent#1#2{{\bf C}_{#1}(#2)}
\def\syl#1#2{{\rm Syl}_#1(#2)}
\def\oh#1#2{{\bf O}_{#1}(#2)}
\def\Oh#1#2{{\bf O}^{#1}(#2)}
\def\zent#1{{\bf Z}(#1)}

\def\ker#1{{\rm ker}(#1)}

\def\alt#1{{\rm Alt}(#1)}

\def\sbs{\subseteq}

\mathchardef\coso="2023
\def\ww#1{{#1}^{\coso}}
\def\cE{\bar{\rm E}}

\begin{document}

\title[Character degree graph of solvable groups]{On the character degree graph \\of solvable groups}

\author[Z. Akhlaghi et al.]{Zeinab Akhlaghi}
\address{Zeinab Akhlaghi, Faculty of Math. and Computer Sci., \newline Amirkabir University of Technology (Tehran Polytechnic), 15914 Tehran, Iran.}
\email{z.akhlaghi@aut.ac.ir}

\author[]{Carlo Casolo}
\address{Carlo Casolo, Dipartimento di Matematica e Informatica U. Dini,\newline
Universit\`a degli Studi di Firenze, viale Morgagni 67/a,
50134 Firenze, Italy.}
\email{carlo.casolo@unifi.it}

\author[]{Silvio Dolfi}
\address{Silvio Dolfi, Dipartimento di Matematica e Informatica U. Dini,\newline
Universit\`a degli Studi di Firenze, viale Morgagni 67/a,
50134 Firenze, Italy.}
\email{dolfi@math.unifi.it}

\author[]{Khatoon Khedri}
\address{Khatoon Khedri, Department of Mathematical Sciences, \newline Isfahan University of Technology, 84156-83111 Isfahan, Iran.}
\email{k.khedri@math.iut.ac.ir}

\author[]{Emanuele Pacifici}
\address{Emanuele Pacifici, Dipartimento di Matematica F. Enriques,
\newline Universit\`a degli Studi di Milano, via Saldini 50,
20133 Milano, Italy.}
\email{emanuele.pacifici@unimi.it}

\thanks{The second, third and fifth author are partially supported by the Italian INdAM-GNSAGA}

\subjclass[2000]{20C15}

\begin{abstract}
Let \(G\) be a finite solvable group, and let \(\Delta(G)\) denote the \emph{prime graph} built on the set of degrees of the irreducible complex characters of \(G\). A fundamental result by P.P. P\'alfy asserts that the complement $\bar{\Delta}(G)$ of the  graph \(\Delta(G)\) does not contain any cycle of length \(3\). In this paper we generalize P\'alfy's result, showing that $\bar{\Delta}(G)$ does not contain any cycle of odd length, whence it is a bipartite graph. As an immediate  consequence, the set of vertices of \(\Delta(G)\) can be covered by two subsets, each inducing a complete subgraph. The latter property yields in turn that if \(n\) is the clique number of \(\Delta(G)\), then \(\Delta(G)\) has at most \(2n\) vertices. This confirms a conjecture by Z. Akhlaghi and H.P. Tong-Viet, and provides some evidence for the famous \emph{\(\rho\)-\(\sigma\) conjecture} by B. Huppert.
\end{abstract}

\maketitle

\section{Introduction}
Character Theory is one of the fundamental tools in the theory of finite groups, and, given a finite group $G$, the study of the set $\cd G = \{ \chi (1)\mid \chi \in  \irr G\}$, of all degrees of the irreducible complex characters of $G$, is a particularly intriguing aspect of this theory. One of the methods that have been devised to approach such degree-set is to consider the {\it prime graph} $\Delta (G)$ attached to it. 

The character degree graph $\Delta (G)$ is thus defined as the (simple undirected) graph whose vertex set is the set $\V G$ of all the prime numbers that divide some $\chi (1)\in {\rm cd}(G)$, while a pair $\{p,q\}$ of distinct vertices \(p\) and \(q\), belongs to the edge set $\E G$ if and only if \(p q\) divides an element in \(\cd G\).

There is a well-developed literature on character degree graphs (see for instance the survey \cite{Lew}). A large part of it is focussed on what is obviously one of the natural motivations behind the introduction of such graphs, that is studying to which extent specific properties of a group are reflected by graph theoretical features of its graph, or aimed at describing in detail the degree graph of interesting (classes of) groups.

As regards the investigation about general properties of \(\Delta(G)\), the celebrated Ito-Michler Theorem may be regarded as the first crucial step: this fundamental result characterizes \(\V G\) as the set of all primes \(p\) for which \(G\) does not have an abelian normal Sylow \(p\)-subgroup. 

On the other hand, another fundamental result \emph{in the context of solvable groups} is P\'alfy's ``Three-Vertex Theorem" (\cite{PPP1}): given any three distinct primes in \(\V G\), at least two of them are adjacent in \(\Delta(G)\). For instance, the bound of $3$ for the diameter in the connected case, as well as the structure of the non-connected case as the union of two complete subgraphs, are straightforward consequences of P\'alfy's theorem. A theorem that may  be rephrased by saying that, for a finite solvable group $G$, the complement of $\Delta (G)$ (that we will denote by $\bar\Delta (G)$) does not contain any triangle.  

The main result of this paper is the following extension of P\'alfy's theorem. We stress that Palfy's theorem is actually proved  here with an argument which embeds naturally into our analysis (specifically, in the last two paragraphs of the proof of Theorem~A). This makes our treatment essentially self-contained.
 
\begin{thmA} 
{\sl Let \(G\) be a finite solvable group. Then the graph \(\bar{\Delta}(G)\) does not contain any cycle of odd length. $\bar{\Delta}(G)$ is therefore a bipartite graph.} 
\end{thmA}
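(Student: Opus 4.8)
The plan is to argue by contradiction, taking a solvable group $G$ for which $\bar\Delta(G)$ contains an odd cycle, of minimal possible length $2k+1$ among all solvable groups; by P\'alfy's theorem we know $2k+1 \geq 5$. Let $p_1, p_2, \dots, p_{2k+1}$ be the vertices of such a cycle, in cyclic order, so that $p_i$ is \emph{nonadjacent} to $p_{i\pm 1}$ in $\Delta(G)$ but (by minimality of the cycle length, which forbids odd chords splitting into two shorter odd cycles) is adjacent in $\Delta(G)$ to all the $p_j$ with $j \neq i, i\pm 1$. The first reduction I would carry out is to pass to a quotient or a subgroup of $G$ that still carries this odd cycle but is as small/structured as possible: one can assume $\oh{\pi'}{G} = 1$ where $\pi = \{p_1, \dots, p_{2k+1}\}$, and, more importantly, bring into play the standard machinery for character degree graphs of solvable groups --- namely looking at a chief factor $V$ of $G$ which is an elementary abelian $q$-group for some prime $q$, acted on by $H = G/\cent G V$, and using that the primes dividing $|H|$ that act with suitable orbit structure on $V$ produce edges, while the absence of an edge $\{p_i, p_{i+1}\}$ forces strong constraints (via Gallagher/Clifford theory and the structure of $p$-solvable linear groups) on how $p_i$ and $p_{i+1}$ can simultaneously act.

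The heart of the argument, as I see it, is a local-to-global analysis around a single ``missing edge'' $\{p_i, p_{i+1}\}$. Fix such a pair and set $q = p_i$, $r = p_{i+1}$. Since $qr \nmid \cd G$, for every $\chi \in \irr G$ at most one of $q, r$ divides $\chi(1)$. I would exploit this by choosing a normal subgroup $N$ minimal such that $q$ or $r$ divides some degree of $G/N$, study the chief factor just below it as a module $V$ over a field of characteristic $\ell \in \{q,r\}$ (or $\ell \notin \{q,r\}$), and invoke the known classification-type results --- e.g. results of the kind P\'alfy and Manz--Wolf use about solvable linear groups, orbit sizes, and the prime divisors of $|{\rm GL}|$-type groups acting fixed-point-freely or with few orbits --- to pin down that a Hall $\{q,r\}$-subgroup of the relevant section has a very restricted structure (metacyclic, or with a normal Sylow, etc.). Running this simultaneously over \emph{all} the consecutive non-edges of the $(2k+1)$-cycle, and combining it with the fact that non-consecutive vertices \emph{are} all pairwise adjacent (so the ``even part'' of the graph is a large clique, encoded by a single character degree or a small set of them), should force an arithmetic/structural contradiction with the odd length of the cycle --- essentially a parity obstruction that even cycles do not have.

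The main obstacle, I expect, is precisely glueing the local information across the cycle in a way that sees the parity: each individual non-edge only tells you about a Hall $\{p_i, p_{i+1}\}$-subgroup, and these overlapping constraints must be organized into one coherent statement about $G$ (or about a single section of $G$ simultaneously controlling all the $p_i$), where the odd-versus-even distinction becomes visible. This is where I anticipate needing a clever choice of the normal section to induct on --- very possibly an inductive setup where one shows that if $\bar\Delta(G)$ has a shortest odd cycle of length $2k+1 \geq 5$, then some proper section has a shorter odd cycle (perhaps of length $2k-1$, by ``contracting'' two consecutive non-edges sharing a vertex into one non-edge after passing to a quotient that kills the shared vertex's contribution), contradicting minimality and ultimately reducing everything to the length-$3$ case, which is P\'alfy's theorem. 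Getting that contraction step to actually produce a non-edge (rather than merely failing to produce an edge) in the smaller section is the delicate point, and it is presumably where the bulk of the paper's technical work --- Clifford theory, the structure of $\oh{p}{\cdot}$ and $\fit{\cdot}$, and fixed-point properties of coprime actions --- gets deployed.
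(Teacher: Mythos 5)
Your write-up correctly sets up the minimal counterexample, observes that a shortest odd cycle has no chords in $\bar{\Delta}(G)$ (so all non-consecutive vertices are adjacent in $\Delta(G)$), and points at the right toolbox: Clifford/Gallagher theory on a chief factor, the structure of solvable linear groups, semilinear embeddings. But the two ideas that actually carry the argument are absent, and the mechanism you propose in their place is the very step you concede you cannot perform. First, the reduction is much sharper than ``assume $\oh{\pi'}{G}=1$ and look at a chief factor'': in a minimal counterexample one proves (Lemma~\ref{Riduzione}) that there is a normal subgroup $W$ whose vertex set contains the entire connected component of $p$ in $\bar{\Delta}(G)$ and whose Fitting subgroup is \emph{either} a minimal normal subgroup of $G$ \emph{or} a nonabelian Sylow $p$-subgroup $P$ with $P'$ minimal normal, and is complemented; minimality then forces $G=FH$ with $F=\fit G$ of one of these two shapes. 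Nothing in your sketch produces this dichotomy, and without it the subsequent module-theoretic analysis has no foothold.

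Second, and decisively, the parity obstruction is not obtained by ``contracting two consecutive non-edges sharing a vertex into one non-edge in a proper section'' so as to get a $(2k-1)$-cycle --- the paper never does this, and you yourself flag that you do not see how to make such a contraction yield a genuine non-edge. Instead the parity is made visible by an alternation lemma relative to $\pi_0=\pi(\fit H)$ (Lemma~\ref{alternanza}): along any path in $\bar{\Delta}(G)$ the vertices must alternate in and out of $\pi_0$, and a path $p_1,p_2,p_3,p_4$ with $p_1\in\pi_0$ forces $\{p_1,p_4\}\in\cE(G)$. When $F$ is abelian minimal normal, $H$ embeds in a semilinear group, so $\pi_0$ and its complement are both cliques of $\Delta(G)$ and an odd cycle cannot alternate between two cliques; when $F$ is a nonabelian Sylow subgroup, Lemmas~\ref{alternanza} and~\ref{pPartenza} manufacture a chord of the cycle inside $\bar{\Delta}(G)$ itself, i.e.\ a shorter odd cycle, contradicting the choice of $\ell$. (The case $\ell=3$ is then handled from scratch by a separate computation showing $|F/F'|=|F'|$ and deriving a contradiction from a commutator $[x,y]$ surviving in a suitable character kernel, so P\'alfy's theorem is reproved rather than quoted.) As it stands, your proposal is a research plan whose central step is missing, not a proof.
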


An immediate consequence is the following.

\begin{Coro}{\sl Let $G$ be a finite solvable group. Then $\V G$ is covered by two subsets, each inducing a  clique (i.e.\!\! a complete subgraph) in $\Delta (G)$. In particular, for every subset \(\mathcal{S}\) of \(\V G\), at least half of the vertices in \(\mathcal{S}\) are pairwise adjacent in \(\Delta(G)\).}
\end{Coro}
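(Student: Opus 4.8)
The plan is to derive Corollary~B directly from Theorem~A, using only the elementary graph-theoretic fact that a finite graph is bipartite if and only if its vertex set splits into two independent sets (equivalently, that having no odd cycle is the same as admitting a proper $2$-colouring). First I would apply Theorem~A to the solvable group $G$ to conclude that $\bar{\Delta}(G)$ is bipartite, and thus write $\V G = A \cup B$ with $A \cap B = \emptyset$ and both $A$ and $B$ independent in $\bar{\Delta}(G)$.

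Next I would pass back to $\Delta(G)$. By the very definition of the complement of a graph, a subset of $\V G$ is an independent set of $\bar{\Delta}(G)$ if and only if it induces a complete subgraph of $\Delta(G)$; hence $A$ and $B$ are cliques of $\Delta(G)$, which is precisely the covering asserted in the first part of the statement.

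For the last sentence, let $\mathcal{S}$ be an arbitrary subset of $\V G$ and write $\mathcal{S} = (\mathcal{S}\cap A) \cup (\mathcal{S}\cap B)$. By the pigeonhole principle at least one of these two parts, say $\mathcal{S}\cap A$, has cardinality at least $|\mathcal{S}|/2$; being contained in the clique $A$, its elements are pairwise adjacent in $\Delta(G)$, and this completes the argument. As already signalled in the text, this deduction is purely formal: the whole substance of Corollary~B is carried by Theorem~A, so there is no genuine obstacle to overcome here beyond the standard dictionary between a graph and its complement (independent sets $\leftrightarrow$ cliques) and the characterisation of bipartiteness via the absence of odd cycles.
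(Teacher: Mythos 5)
Your deduction is correct and is exactly the argument the paper has in mind: the authors omit the proof of Corollary~B precisely because it is this formal translation (bipartiteness of $\bar{\Delta}(G)$ $\Rightarrow$ two independent sets $\Rightarrow$ two cliques in $\Delta(G)$, plus pigeonhole for the final claim). Nothing is missing.
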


The fact that, for a finite solvable group \(G\), the set \(\V G\) is covered by two subsets each inducing a clique was already known to be true in three special cases: when \(\Delta(G)\) is disconnected (as already mentioned), for metanilpotent groups (\cite[Theorem A]{Lewis2006}) and, in the connected case, when the diameter of \(\Delta(G)\) attains the upper bound \(3\) (see Remark 4.4 in~\cite{CDPS}). By Corollary B, this is indeed a feature of \(\Delta(G)\) in full generality.

Again by Corollary B, if \(G\) is a finite solvable group and \(n\) is the maximum size of a clique in \(\Delta(G)\), then \(|\V G|\leq 2n\). This is precisely what is conjectured by Akhlaghi and Tong-Viet in \cite{ATV} (where the authors prove the result for \(n=3\)). 

Moreover, since the distinct prime divisors of a single irreducible character degree do of course induce a clique in $\Delta (G)$, Corollary B provides some more evidence for the famous (and still open)  \emph{\(\rho\)-\(\sigma\)\! conjecture} by Huppert, which predicts that any finite solvable group $G$ has an irreducible character whose degree is divisible by at least half the primes in $\V G$.

Finally we just recall that, as shown for instance by \(\alt 5\), our theorem (as well as P\'alfy's) does not hold for non-solvable groups.

\section{Notation and preliminary results}

Throughout this paper, every group is tacitly assumed to be finite. As customary, for $n$ a positive integer, $\pi (n)$ denotes the set of all prime divisors of $n$, and, if $G$ is a group,  $\pi (G)= \pi (|G|)$. For a given group \(G\), we denote by \(\Delta(G)\) the character degree graph as defined in the introduction, and write $\bar{\Delta}(G)$ for the complement of \(\Delta(G)\); we emphasize that the set of vertices of $\Delta (G)$ (i.e.\!\! the set of all $p\in \pi (G)$ such that $G$ does not have a normal abelian Sylow $p$-subgroup) is denoted by $\V G$.
We also denote by $\E G$  and $\cE(G)$ the set of edges of $\Delta(G)$ and of $\bar{\Delta}(G)$, respectively.   
If \(N\) is a normal subgroup of \(G\) and \(\lambda\in\irr N\), then \(\irr{G\mid\lambda}\) denotes the set of irreducible characters of \(G\) whose restriction to \(N\) has \(\lambda \) as an irreducible constituent, and \(\cd{G\mid\lambda}\) the set of degrees of the characters in \(\irr{G\mid\lambda}\). 

We will freely use without references some basic  facts of Character Theory  such as  Clifford Correspondence, Gallagher's Theorem, Ito-Michler's Theorem, properties of character extensions and coprime actions (see \cite{Is}).

 Also, we will take into account the following well-known result concerning character degrees (in the following statement, \(\fit G\) and \(\fitd G\) denote the Fitting subgroup and the second Fitting subgroup of \(G\), respectively).

 \begin{lemma}[{\cite[Proposition 17.3]{MW}}]
\label{brodkey} Let $G$ be a solvable group. Then there exists $\chi\in\irr G$
such that $\pi(\fitd G/\fit G) \sbs \pi(\chi(1))$.
\end{lemma}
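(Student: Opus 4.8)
\textbf{Proof proposal for Lemma~\ref{brodkey}.} The statement to prove is that for a solvable group $G$ there exists an irreducible character $\chi$ of $G$ with $\pi(\fitd G/\fit G)\subseteq \pi(\chi(1))$, where $\fitd G$ is defined by $\fitd G/\fit G = \fit{G/\fit G}$. The plan is to reduce to the case where $\fit G = \oh p G$ is a $p$-group and then produce a single irreducible character of $G$ divisible by every prime acting nontrivially on it.

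First I would set $F = \fit G$ and $N = \fitd G$, and write $\bar G = G/\Phi(F)$. Since $F/\Phi(F)$ is a completely reducible $\bar G$-module (a direct sum of its Sylow subgroups, each a semisimple module over the appropriate field), and since $N/F$ acts faithfully on $F/\Phi(F)$ by a standard property of the Fitting subgroup in solvable groups (namely $\cent{G}{F}\subseteq F$, hence $\cent{G}{F/\Phi(F)}$ modulo $\Phi(F)$ still sits inside $F/\Phi(F)$), every prime $r \in \pi(N/F)$ acts nontrivially on some irreducible constituent $V_r$ of $F/\Phi(F)$. Identifying such a constituent with an irreducible Brauer/ordinary module, duality lets us pass to a linear character $\lambda_r \in \irr{F}$ with $\Phi(F) \le \ker\lambda_r$ on which an element of order $r$ from $N$ acts nontrivially; equivalently, the inertia group $I_G(\lambda_r)$ does not contain a full Sylow $r$-subgroup of $N$. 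By Clifford theory, $\cd{G\mid\lambda_r}$ then contains a degree divisible by $r$: indeed $[N:I_N(\lambda_r)]$ divides the degree of any character in $\irr{G\mid\lambda_r}$, and this index is divisible by $r$.

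The difficulty — and the main obstacle — is that this argument, run separately for each $r$, produces \emph{different} characters $\chi_r$, one for each prime, whereas the lemma demands a \emph{single} $\chi$ divisible by all of them simultaneously. To overcome this I would take the full orbit sum: let $\theta = \prod_r \lambda_r$ be the product of a family of linear characters of $F/\Phi(F)$, one "bad" for each $r \in \pi(N/F)$, chosen so that $\theta$ itself is acted on nontrivially-enough, i.e. so that $I_N(\theta)$ contains no element of order $r$ centralizing $V_r$, for every $r$ at once. Concretely, view $F/\Phi(F) = \bigoplus_i V_i$ with the $V_i$ the $\bar G$-homogeneous components; pick on each relevant component a linear character $\mu_i$ of that component whose $N$-stabilizer is "small" in the $r$-local sense for all primes $r$ acting on $V_i$ (this is possible because the set of linear characters fixed by a given nontrivial $r$-element is a proper subspace, and a union of finitely many proper subspaces over the field in question cannot cover the whole dual space once the field is large enough — which one arranges by working in $F/\Phi(F)$ where each component is elementary abelian). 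Letting $\lambda \in \irr F$ be the product of these $\mu_i$, one gets $I_N(\lambda) \le F$ up to the primes we care about, so $[N : I_N(\lambda)]$ is divisible by every $r \in \pi(N/F)$; then any $\chi \in \irr{G\mid\lambda}$ has $[N:I_N(\lambda)] \mid \chi(1)$ and hence $\pi(N/F)\subseteq\pi(\chi(1))$, as wanted. Since this is cited as a known result from \cite{MW}, I would not belabour the combinatorial "covering by proper subspaces" point beyond noting it; the essential content is the faithful completely reducible action of $N/F$ on $F/\Phi(F)$ combined with Clifford's theorem applied to a well-chosen linear character of $F$.
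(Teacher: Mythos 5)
The paper does not actually prove this lemma (it is quoted from \cite[Proposition 17.3]{MW}), so your proposal must stand on its own. Your overall strategy --- pass to a faithful, completely reducible action of $\fitd G/\fit G$ on a Frattini quotient of $F=\fit G$, dualize, find a single linear character $\lambda$ of $F$ whose inertia group fails to contain a full Sylow $r$-subgroup of $\fitd G$ for every $r\in\pi(\fitd G/\fit G)$, and then apply Clifford's theorem --- is exactly the right one, and you correctly isolate the real difficulty: making one $\lambda$ work for all primes simultaneously.

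The step you offer to resolve that difficulty, however, does not work. You argue that the ``bad'' characters for each prime form a proper subspace of the dual of each component, and that a union of finitely many proper subspaces cannot cover the whole space ``once the field is large enough, which one arranges by working in $F/\Phi(F)$''. You cannot arrange the field to be large: the components are elementary abelian $q$-groups for whatever primes $q$ divide $|F|$, and over $\F_2$ a union of as few as three proper subspaces can equal the whole space (three hyperplanes through a common codimension-two subspace of $\F_2^{\,n}$ cover $\F_2^{\,n}$ for every $n\ge 2$); increasing the dimension does not help, and $|\pi(\fitd G/\fit G)|$ may well exceed $q+1$. So the covering argument genuinely fails as stated. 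The correct resolution uses a structural fact you never invoke: for each $r$, the subgroup $R/F=\oh{r}{\fitd G/\fit G}$ is \emph{normal} in $G/F$, so its fixed-point space on any irreducible constituent is a submodule, hence is either the whole constituent or zero. Thus on every constituent on which $R$ acts nontrivially the only ``bad'' character is the trivial one, and any $\lambda$ whose component on each irreducible constituent is nontrivial already satisfies $R\not\le I_G(\lambda)$ for all $r$ at once --- no covering estimate is needed. (Equivalently, one can apply the regular orbit theorem for abelian groups to $\Omega_1(\zent{\fitd G/\fit G})$.) A secondary imprecision: complete reducibility is Gasch\"utz's theorem for $F/\Phi(G)$, not for $F/\Phi(F)$; you should work modulo $\Phi(G)$, which is harmless since $\fit{G/\Phi(G)}=F/\Phi(G)$ and $\fitd{G/\Phi(G)}=\fitd G/\Phi(G)$.
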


Let $\Gamma(p^n)$ be the semilinear group on the field $\GF{p^n}$ (see~\cite[Section 2]{MW}). If \(V\) is an \(n\)-dimensional vector space over \({\rm{GF}}(p)\), then \(V\) can be identified with the additive group of a field of order \(p^n\), and in this sense we write \(\Gamma(V)\) for \(\Gamma(p^n)\). 

The following result will be a key ingredient in our discussion. Before stating it, we also recall that a prime $t$ is called a \emph{primitive prime divisor} for  $(a,n)$ (where $a>1$ and $n$ are positive integers) if $t$ divides $a^n-1$ but $t$ does not  divide $a^j-1$ for $1\leq j< n$.

\begin{lemma}\label{KeyLemma}
Let $G=AH$ be a solvable group, where $A=\fit G$ is abelian and $(|A|, |H|) = 1$. Let $q \in \pi(\fit H)$, and
let $s \in \pi(H)$ be distinct vertices such that  $\{q,s\}\not\in\E G$.
Let $Q\in \syl q{\fit H}$,  $S \in \syl sH$ and $L = (QS)^H$ (the normal
closure of \(QS\) in $H$). Then the following conclusions hold.
\begin{enumeratei}
\item \(A=U\times W\), where \(U=[A,Q]=[A,L]\) is an elementary abelian \(p\)-group for a suitable prime \(p\), and \(W=\cent AQ= \cent AL\). 
\item $L = L_0S \leq \Gamma(U)$, where $L_0 = \fit L$ is a cyclic group acting fixed-point freely and irreducibly on $U$.
\item Setting \(|U|=p^n\), there exists a primitive prime divisor \(t\) of \((p,n)\) such that \(t\) divides \(|L_0|\).
\item For every prime \(r\in\pi(H)\) with \(\{r,s\}\not\in\E G\), \(H\) has an abelian normal Sylow \(r\)-subgroup. In particular, $Q \in \syl qH$.
\end{enumeratei}
\end{lemma}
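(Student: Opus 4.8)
\textbf{Proof plan for Lemma~\ref{KeyLemma}.}
The plan is to set up the standard reduction for a non-edge $\{q,s\}$ in the degree graph of a group $G = AH$ with $A = \fit G$ abelian and coprime to $H$, and then run a ping-pong between the character-degree hypothesis and the coprime action of $H$ on $A$. The starting point is that, since $A$ is an abelian self-centralizing normal subgroup, every $\lambda \in \irr A$ is linear and $\cd{G} \supseteq \{[H : I_H(\lambda)] : \lambda \in \irr A\}$ (more precisely the degrees $\chi(1)$ for $\chi \in \irr{G \mid \lambda}$ all have the form $[H:I_H(\lambda)] \cdot \psi(1)$ with $\psi$ a character of the stabilizer's complement). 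So a non-edge $\{q,s\}$ forces: no $\lambda \in \irr A$ can have both $q$ and $s$ dividing $[H : I_H(\lambda)]$, and moreover no product of a $q$-part coming from the $A$-action with an $s$-part coming anywhere inside an $H$-section can occur. I would first extract from this the decomposition in (a): writing $U = [A,Q]$ and $W = \cent A Q$, coprimality gives $A = U \times W$ with $Q$ acting fixed-point-freely on $U$; one checks $[A,Q] = [A,L]$ and $\cent A Q = \cent A L$ by using that $L/L\cap\cent H{U}$ embeds in $\aut U$ and that $Q$ is nontrivial on $U$ — the point being that if $L$ centralized a larger subgroup than $Q$ does, one could build a character degree divisible by $qs$. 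That $U$ is a $p$-group (single prime $p$) and elementary abelian should follow from minimality considerations: pick $U$ to be a chief factor-type piece, or argue that on a $Q$-invariant section the Frattini quotient already gives the contradiction, so we may pass to $U$ irreducible as an $\F_p Q$-module, hence elementary abelian.

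For (b), the key input is Huppert's classification of solvable linear groups acting so that the induced character degrees avoid the edge $\{q,s\}$ — concretely, $L$ acts on $U$ and $S$ must act, together with $Q$, in a way that forbids a common multiple of $q$ and $s$ among the orbit sizes. The standard conclusion is that $L$ is forced into a semilinear group: $L \le \Gamma(U)$, with $L_0 = \fit L$ cyclic acting fixed-point-freely and irreducibly on $U$, and $S$ (of order a power of $s$) mapping into the "Galois part" $\Gamma(U)/L_0$, which is cyclic of order $n$. I expect this to be the heart of the argument and the main obstacle: one must rule out all the "large" primitive solvable linear group configurations (the ones in Huppert's list involving extraspecial normal subgroups, $\SL 2 3$, etc.) by exhibiting in each of them a character of $G$ whose degree is divisible by $qs$, or else by contradicting that $q \in \pi(\fit H)$ or the non-adjacency with $r$'s. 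The cyclic-and-irreducible structure of $L_0$ on $U$ is exactly the conclusion that $L$ is metacyclic inside $\Gamma(U)$.

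Given (b), part (c) is essentially immediate: $L_0$ cyclic acting irreducibly and fixed-point-freely on $U \cong \F_{p^n}$ means $L_0 \le \F_{p^n}^\times$ is cyclic, and irreducibility forces the order of $L_0$ to not divide $p^j - 1$ for any proper divisor $j$ of $n$ (otherwise $L_0$ would preserve the subfield $\F_{p^j}$, violating irreducibility over $\F_p$). Hence $|L_0|$ is divisible by some prime $t$ with $t \nmid p^j - 1$ for $1 \le j < n$, i.e.\ a primitive prime divisor of $(p,n)$. (The Zsygmondy-type existence is not needed here — we get $t$ for free as a divisor of $|L_0|$; one just notes $n > 1$, which holds because $Q$ acts nontrivially and an abelian group acting faithfully irreducibly on $\F_p$ would be trivial.)

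Finally, for (d): let $r \in \pi(H)$ with $\{r,s\} \notin \E G$. If $r = q$ there is nothing new beyond noting $Q \in \syl q H$, which we get because a Sylow $q$-subgroup of $\fit H$ that were not Sylow in $H$ would, acting on $U$ jointly with the rest, again produce a degree divisible by $qs$ (using that $s$ already contributes a factor via its action inside $\Gamma(U)$ from part (b)); so assume $r \ne q$. Suppose for contradiction that $H$ does not have an abelian normal Sylow $r$-subgroup, i.e.\ $r \in \V{H} \subseteq \V G$. Then $r$ divides some degree of $H$, hence of $G$; combined with the fact (from (b), (c)) that $s$ divides $|L_0| \cdot |S|$ and $L_0 S \le \Gamma(U)$ is a Frobenius-like group sitting on $U$, I would build a character of $G$ lying over a suitable $\lambda \in \irr U$ whose degree is divisible by both $r$ and $s$: take $\lambda$ with $I_H(\lambda) \cap L = L_0$-conjugate so that $s \mid [H : I_H(\lambda)]$, and then use that $r \mid \cd{I_H(\lambda)}$ (since $r \in \V{}$ of a subgroup close to $H$, or descend $r$ into the stabilizer via a counting/Ito-type argument) to get $rs \mid \chi(1)$ for some $\chi \in \irr G$ — contradicting $\{r,s\} \notin \E G$. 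The delicate point is ensuring $r$ survives into the stabilizer $I_H(\lambda)$; one handles it by choosing $\lambda$ trivial on $W$ and on enough of $H$ that $r$-degree-contributions are untouched, or by replacing $H$ with a minimal counterexample so that $\oh{r}{H}$ is forced to be non-abelian and act on the relevant section. This yields (d), and in particular $Q \in \syl q H$ upon taking $r = q$. \qed
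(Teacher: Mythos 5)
Your proposal is a plan rather than a proof, and the points it leaves open are precisely the substantive ones. For (a)--(c) the paper simply invokes Lemma~3.9 of \cite{CDPS}; you instead sketch a direct argument, but you yourself defer the ``heart of the argument'' for (b) --- eliminating the non-semilinear primitive configurations from Huppert's classification --- as an unproven obstacle, and your reduction for (a) (``pass to $U$ irreducible as an $\F_pQ$-module'') proves a weaker statement than the lemma asserts, since the lemma claims that $U=[A,Q]$ \emph{itself} is elementary abelian and is acted on irreducibly by $L_0$. Moreover your derivation of (c) contains a genuine logical error: from the fact that $|L_0|$ divides no $p^j-1$ with $j<n$ you cannot conclude that some \emph{prime} divisor of $|L_0|$ divides no such $p^j-1$. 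For instance a cyclic group of order $9$ acts irreducibly on $\F_{2^6}$, yet $3$ divides $2^2-1$, and $(2,6)$ admits no primitive prime divisor at all; the Zsigmondy exceptions are exactly why (c) is a nontrivial assertion that has to be extracted from the full hypotheses (e.g.\ $s\mid |L/L_0|$ with $s$ coprime to $p$), not from irreducibility alone.

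For (d), which is the only part the paper actually proves, your strategy (manufacture $\chi\in\irr G$ with $rs\mid\chi(1)$ by pushing $r$ into a stabilizer $I_H(\lambda)$ for $\lambda\in\irr U$) stalls at the step you yourself flag as delicate, and the suggested fixes are not arguments. The paper's route is different and avoids this issue entirely: set $K=\cent HU$ and show $H/K\leq\Gamma(U)$, using the characteristic subgroup of order $t$ supplied by (c) to force a normal subgroup of $H/K$ to act irreducibly on $U$; then $s$ divides $|H/X|$, where $X/K=\fit{H/K}$ is cyclic, so $r\nmid|H/X|$ because $\pi(H/X)$ is a clique of $\Delta(G)$ containing $s$, whence $H/K$ has a cyclic normal Sylow $r$-subgroup and $r\notin\V{H/K}$. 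Next $r\notin\V K$ is obtained by pairing a putative $\phi\in\irr K$ of degree divisible by $r$ with a $\theta\in\irr L$ of degree divisible by $s$. Finally every $\chi\in\irr H$ has $\chi(1)=e\cdot\beta(1)\cdot|H:I_H(\beta)|$ for some $\beta\in\irr K$, and each factor is shown to be prime to $r$ (the projective degree $e$ via a Schur cover of $I_H(\beta)/K$, which inherits the cyclic normal Sylow $r$-subgroup), so that $r\notin\V H$ and Ito--Michler gives the abelian normal Sylow $r$-subgroup of $H$. None of these steps appears in your sketch, so the actual conclusion of (d) --- a statement about $H$, not merely about $\V G$ --- is not reached.
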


\begin{proof} As concerns (a), (b) and (c), the statement is a special case of Lemma~3.9 in \cite{CDPS}. Thus we only have to prove (d).

Set $K = \cent HU$, and observe that \(U\) is a faithful \(H/K\)-module over \(\GF p\). As clearly $L\cap K = 1$, claims (b) and (c) ensure that \(\fit{LK/K}\) has a characteristic subgroup \(T/K\) of order \(t\), which is therefore characteristic in \(LK/K\) and hence normal in \(H/K\). Since \(t\) is a primitive prime divisor of \((p,n)\), the action of \(T/K\) on \(U\) is easily seen to be irreducible, and therefore \(H/K\) embeds in \(\Gamma(U)\) (see Theorem~2.1 in \cite{MW}, for instance). As a consequence, denoting by \(X/K\) the Fitting subgroup of \(H/K\), the prime divisors of \(H/X\) constitute a clique in \(\Delta(H/K)\), which is a subgraph of \(\Delta(G)\); moreover, Lemma~3.7 of \cite{CDPS} yields that \(X/K\) is cyclic.

By Lemma~\ref{brodkey} and by our assumption of nonadjacency between \(q\) and \(s\), the prime \(s\) is not in \(\pi(\fit H)\). It follows that \(S\) is not normal in \(L\), so \(SK/K\) is not normal in \(LK/K\) and \(H/K\) does not have a normal Sylow \(s\)-subgroup. In other words, \(s\) is a divisor of \(H/X\). Now, taking into account the conclusion of the paragraph above, if \(r\in\pi(H)\) is nonadjacent to \(s\) in \(\Delta(G)\), then \(r\nmid|H/X|\). 
As a consequence, \(H/K\) has a cyclic normal Sylow \(r\)-subgroup, thus \(r\not\in \V{H/K}\). Observe also that \(r\not\in \V K\) as well. In fact, certainly there exists \(\theta\in\irr L\) such that \(\theta(1)\) is divisible by \(s\); thus, if \(\phi\in\irr K\) has a degree divisible by \(r\), then, for any \(\xi\in\irr H\) such that \(\theta\times\phi\in\irr{LK}\) is a constituent of \(\xi_{LK}\), we would have \(r\cdot s\mid\xi(1)\), against the nonadjacency between \(r\) and \(s\) in \(\Delta(G)\). Now, let \(\chi\) be in \(\irr H\), and let \(\beta\) be an irreducible constituent of \(\chi_K\): we have \[\chi(1)=e\cdot\beta(1)\cdot|H:I_H(\beta)|,\] where \(e\) is the degree of an irreducible projective representation of \(I_H(\beta)/K\). If \(r\) divides \(|H:I_H(\beta)|\), then it also divides \(|H:I_H(\theta\times\beta)|\) (where \(\theta\in\irr L\) is as above), and therefore any \(\xi\in\irr{H\mid\theta\times\beta}\) would have a degree divisible by \(r\cdot s\), a
 gain a contradiction. Also, \( e\) is the degree of an irreducible ordinary representation of a Schur covering \(\Gamma\) of \(I_H(\beta)/K\); but, since \(I_H(\beta)/K\) has a cyclic normal Sylow \(r\)-subgroup, we have that \(\Gamma\) has an abelian normal Sylow \(r\)-subgroup, therefore \(r\) does not divide \(e\) as well. Since we observed that \(\beta(1)\) is not divisible by \(r\), we conclude that \(r\nmid\chi(1)\); as this holds for every \(\chi\in\irr H\), we get that \(H\) has an abelian normal Sylow \(r\)-subgroup, as claimed. 
\end{proof}

\section{Proof of Theorem A}

In this section we prove Theorem~A, whereas Corollary~B can be immediately deduced from Theorem A, and its proof is therefore omitted.

The next three lemmas will be the core of our argument. In the following statement, \(\frat G\) denotes as customary the Frattini subgroup of the group \(G\).

\begin{lemma} 
\label{Riduzione}
Let $G$ be a solvable group such that, for every proper factor group $\overline{G}$ of $G$, we have 
$\V{\overline{G}} \neq \V G$.
Let $M$ be a minimal normal subgroup of $G$, let $p \in \V G \setminus \V{G/M}$ and $P\in\syl p G$. Also, let $\pi_p$ be the set of vertices of the connected component
of $p$ in $\bar{\Delta}(G)$. 
Then there exists a normal subgroup $W$ of $G$ such that $\pi_p \subseteq \V{W}$,
and either $\fit W = P$ with $P' = M$, or $\fit W = M$ with $M\cap\frat G=1$; 
in particular, $\fit W$ is complemented in $G$.
\end{lemma}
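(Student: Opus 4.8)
The plan is to produce $W$ by a minimal‑choice argument and then to pin down $\fit W$ through a case analysis on the prime $r$ for which $M$ is an elementary abelian $r$‑group. First I would record the structural consequences of the hypotheses. Since $p\in\V G$, a Sylow $p$‑subgroup $P$ of $G$ is not a normal abelian subgroup of $G$; since $p\notin\V{G/M}$, the group $PM/M$ is a normal abelian Sylow $p$‑subgroup of $G/M$, so $PM\nor G$ and $P'\le P\cap M$. Now distinguish $r=p$ from $r\neq p$. If $r=p$, then $M\le\oh pG\le P$, so $PM=P\nor G$; as $P$ is nonabelian we get $1\neq P'\nor G$ with $P'\le M$, and minimality of $M$ forces $P'=M$. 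Here $\fit W=P$ is the target, and the complementation is free: $P$ is a normal Sylow subgroup of the solvable group $G$, so it has a complement by Schur--Zassenhaus. If $r\neq p$, then $P\cap M=1$, so $P'=1$ and $P$ is abelian; here $\fit W=M$ is the target, and since a minimal normal subgroup not lying in $\frat G$ has a complement, the whole point is to establish $M\cap\frat G=1$, i.e. $M\not\le\frat G$.

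To build $W$, I would take $W$ of minimal order among the normal subgroups $X$ of $G$ with $M\le X$, with $\pi_p\subseteq\V X$, and (when $r=p$) with $P\le X$; this family is nonempty since $X=G$ qualifies. The core of the proof is then to show $\fit W$ is as claimed. One direction is immediate: $M\le\fit W$ always, and when $r=p$ one has $\oh pW=P$, since $P\nor W$ is a Sylow $p$‑subgroup of $W$. For the reverse inclusion I would argue that a normal subgroup of $G$ sitting in the ``wrong'' part of $\fit W$ — say a minimal normal subgroup $R$ of $G$ inside $\oh qW$ for an offending prime $q$ — could be removed without destroying the defining property of $W$, contradicting minimality. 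This removal step is where the work lies: one must check that passing to a suitable normal subgroup of $G$ of strictly smaller order keeps $M$ (or its image) inside it and keeps every prime of $\pi_p$ as a vertex. The latter is the delicate part, because $\V{\,\cdot\,}$ need not behave monotonically in the direction we need; the remedy is to travel along the edges of $\bar{\Delta}(G)$ inside the connected component $\pi_p$ and re‑run the Clifford‑theoretic bookkeeping from the proof of Lemma~\ref{KeyLemma}: if $s\in\pi_p$ is nonadjacent in $\Delta(G)$ to a prime already known to be a vertex of the smaller subgroup, then $s$ is a vertex of that subgroup as well, since otherwise a common irreducible constituent there would lift to a character of $G$ of degree divisible by both primes, contradicting nonadjacency. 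Finally, in the case $r\neq p$, this same minimality together with the standing hypothesis that every proper quotient of $G$ has strictly smaller vertex set is what rules out $M\le\frat G$.

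I expect the main obstacle to be exactly the tension built into the definition of $W$: it must be large enough that all of $\pi_p$ persists as vertices, yet tightly enough organized around $M$ that $\fit W$ collapses onto $P$ (when $r=p$) or onto $M$ (when $r\neq p$). Concretely, the hard step is propagating membership in the component $\pi_p$ downward to $W$ and to its further sections, which forces one to use the connectivity of $\pi_p$ in $\bar{\Delta}(G)$ and the Clifford correspondence hand in hand, much as in part~(d) of Lemma~\ref{KeyLemma}.
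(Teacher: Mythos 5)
Your opening reductions are sound and agree with the paper: from $p\in\V G\setminus\V{G/M}$ one gets $PM\nor G$ with $P'\le M$, whence either $P\nor G$ with $P'=M$ (when $M$ is a $p$-group), or $P$ is abelian and $M\not\le\frat G$ (otherwise $P\le\fit G$ would be abelian and normal in $G$); the complementation statements then follow from Schur--Zassenhaus and Gasch\"utz. The problem is that everything after that --- which is the actual content of the lemma --- is only a sketch, and the sketch rests on a false principle. You define $W$ as a minimal-order normal subgroup of $G$ containing $M$ (and $P$) with $\pi_p\subseteq\V W$, and propose to identify $\fit W$ by ``removing'' an offending minimal normal subgroup $R\le\fit W$; but you never say which smaller normal subgroup of $G$ you pass to (a complement to $R$ inside $W$ need not exist as a normal subgroup of $G$, and $\cent WR$ may well equal $W$), nor why the passage preserves the property $\pi_p\subseteq\V{\,\cdot\,}$. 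More seriously, the one-line argument you offer for propagating vertices --- that if $s$ is nonadjacent in $\Delta(G)$ to a prime $t$ already known to lie in $\V N$ for some $N\nor G$, then $s\in\V N$ --- is false in general: take $G=\SL{2,3}$ and $N=Q_8$ with $t=2$, $s=3$; here $2$ and $3$ are nonadjacent in $\Delta(G)$ and $2\in\V N$, but $3\nmid |N|$. So the containment $\pi_p\subseteq\V W$, which is the heart of the lemma, is not established by your argument.

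The paper avoids both difficulties by an explicit construction rather than a minimal choice: it decomposes $\fit G=K\times\ww M$ (where $\ww M=P$ or $\ww M=M$ according to the two cases), takes a complement $L$ of $\fit G$ in $G$, sets $H=L\ww M$ and $W=\cent HK$, so that $W\nor G$ and $\fit W=W\cap \fit G=\ww M$ come essentially for free. The containment $\pi_p\subseteq\V W$ is then proved by induction on the distance $d$ from $p$ to $q$ in $\bar{\Delta}(G)$: the base case $d=1$ applies Gallagher and Clifford to characters of $\ww M$ nontrivial on $M$ to force an abelian Sylow $q$-subgroup of $KL$ into the relevant inertia groups and hence a Sylow $q$-subgroup of $L$ into $\cent HK$; the inductive step passes to a complemented chief factor $\overline U=U/V$ of the normal closure of a Sylow $t$-subgroup already placed in $W$ and analyzes inertia groups of characters $\phi\times\lambda$ of $\overline K\times\overline U$. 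This induction along the connected component is precisely the machinery your proposal would need to supply and does not.
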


\begin{proof} 
Let $\mathcal{N}_0$ be the set of the minimal normal subgroups of $G$ which are contained in $\frat G$. 
For $N_0 \in \mathcal{N}_0$, if $p \in \V{G} \setminus \V{G/N_0}$ and $P \in \syl pG$, then
$PN_0/N_0$ is abelian and normal in $G/N_0$, so $P$ is normal in $G$ (as $\fit{G/N_0} = \fit{G}/N_0$) and
$P' = N_0$. Note that $p$ is therefore the unique prime in $\V{G} \setminus \V{G/N_0}$. In this situation, we define $\ww{N}_0= P$; obviously $\ww{N}_0$ is complemented in $G$ and, setting \(F=\fit G\), there exists \(K\nor G\) such that \(F=K\times\ww{N}_0\). 
On the other hand, let  $\mathcal{N}_1$ be the set of the minimal normal subgroups of $G$ that are \emph{not} contained in $\frat G$. If \(N_1\in\mathcal{N}_1$, we define $\ww{N}_1 = N_1$: since  $\ww{N}_1 \cap \frat G = 1$, also in this case we have that $\ww{N}_1$ is complemented in $G$ (see 4.4 in \cite[III]{Hu}) and, as $\ww{N}_1 \leq \zent F$, there exists \(K\nor G\) such that \(F=K\times\ww{N}_1\).   

Observe that $F$ is a direct product of subgroups $\ww N$, where $N$ varies in  $\mathcal{N}_0$ and in a suitable subset of $\mathcal{N}_1$. Moreover, if \(\sigma\) is the set of prime divisors of \(|N_0|\) for \(N_0\in\mathcal{N}_0\), we have \(\oh{{\sigma}'}{F}\cap\frat G=1\), and it is easily seen that \(F\) itself has a complement \(L\) in \(G\). 

Let now $M$ be a minimal normal subgroup of $G$. Take again $p \in \V{G} \setminus \V{G/M}$, $P \in \syl pG$, and let $\ww{M}$ be as above. As mentioned, we can write \(F=K\times \ww M\) where \(K\) is a suitable normal subgroup of \(G\), thus $H = L\ww M$ is a complement for $K$ in $G$. 

Define $W = \cent HK$. We have $W \nor G$,  $W \cap F = \ww M$ and hence $\fit W = \ww M$. Note also that, as $P$ commutes with $K$ modulo $M$, we get $[P, K] \leq M \cap K = 1$; in particular, \(W\) contains the Sylow \(p\)-subgroups of \(H\), and $p \in \V W$.

Let now $q\in \pi_p$ be a vertex of the connected component of $p$ in $\bar{\Delta}(G)$, and 
let $ Q \in \syl qL$. We prove, by induction on the distance $d = d_{\bar{\Delta}(G)}(p, q)$ that 
$1 \neq Q \leq W$. From this it follows immediately $q \in \V W$, since $\fit W = \ww M$.

We consider first the case $d = 1$.
Given a character $\lambda \in \irr{\ww{M}}$ such that $\lambda_M \neq 1_M$, we have 
that $p$ divides $\chi(1)$ for every $\chi \in \irr{G|\lambda}$. 
Observe also that, both in the case $\ww M = P$ as also $\ww M = M$, $\lambda$ extends to 
$I_G(\lambda)$. 
Hence, Gallagher's Theorem and Clifford Correspondence, with  
the nonadjacency between $q$ and $p$ in $\Delta(G)$, imply that $I_{KL}(\lambda)\simeq I_G(\lambda)/\ww M$ contains a 
Sylow $q$-subgroup $Q_0$ of $LK$ as a normal subgroup, and that $Q_0$ is abelian. 
Let now $Q$ be a Sylow $q$-subgroup of $L$; by a suitable choice of $\lambda$, we can assume that
$Q = Q_0 \cap L$. 
Since $K$ centralizes $\ww M$,  $K$ is a normal subgroup of $I_{KL}(\lambda)$ as well.
Hence $Q$ centralizes 
$K = (K \cap Q_0) \times \oh{q'}K$, i.e. $Q \leq W$. 
Observe that  $Q \neq 1$, as otherwise $(K \cap Q_0) \times M$ would contain an abelian normal Sylow 
$q$-subgroup of $G$.

Assume now $d \geq 2$ and let $t$ be the vertex adjacent 
to $q$ in a path of length $d$ from $p$ to $q$ in $\bar{\Delta}(G)$. Let $T\in \syl t L$.
Working by induction on $d$,  we have  $1\ne T\le W$. Observe that, since $t\ne p$ and $\ww M=\fit W$, $T$ is certainly not normal in $W$.
Let $X = T^G=T^W$ (the normal closure of $T$ in $W$)  and $U = \Oh tX$; then $1\ne U = [U,T]$. 
Let $\overline U = U/V$ be a chief factor of $G$. 
If $\overline U\leq\frat{\overline G}$, then $\overline{T}$ is normal in $\overline G$ and so 
$[\overline U,\overline T]=1$; in particular $[U, T]\leq V<U$, a contradiction. 
Thus $\overline U \cap \frat{\overline G} = 1$ and hence  $\overline U$ is complemented in $\overline G$.
Now, for $1\ne \lambda\in \irr {\overline U}$, the prime $t$ divides the index 
$|\overline G: I_{\overline G}(\lambda)|$ and hence the fact that  $t$ is not adjacent to $q$ in 
$\Delta(\overline G)$ forces $I_{\overline G}(\lambda)$ to contain $\overline Q_0 = Q_0V/V$ as an abelian 
normal subgroup, where $Q_0$ is a Sylow $q$-subgroup of $G$. 
Note that, by a suitable choice of $\lambda$, we can assume  $Q = Q_0 \cap L$.
 
Now, let $Y = KU$; since $K\cap U = 1$, we have $\overline Y = Y/V = \overline K\times \overline U$, where $\overline K = KV/V\simeq K$. Let $1\ne \lambda\in \irr {\overline U}$ and $Q_0$ be as above. For every $\phi \in \irr{\overline K}$ we have  $I_{\overline G}(\phi\times\lambda)\le I_{\overline G}(\lambda)$, hence $t$ divides the index $|\overline G: I_{\overline G}(\phi\times\lambda)|$; since $t$ and $q$ are not adjacent in $\Delta(\overline G)$, this yields $\overline Q_0 \le I_{\overline G}(\phi\times\lambda) \le I_{\overline G}(\phi)$, and this holds for every $\phi\in \irr {\overline K}$. Thus $[\overline K,\overline Q_0]=1$, that is $[K, Q_0]\le K\cap V= 1$. Hence, in particular, $Q\le \cent H K=W$. 
Moreover,  $Q_0 \cap K$ is abelian; so  $Q$ cannot be trivial, for otherwise 
$Q_0 = Q_0 \cap F$ would be  abelian and normal in  $G$. The proof is complete. 
\end{proof}

\begin{lemma} Let \(p\) be a prime, \(E\) an elementary abelian \(p\)-group, and \(H\) a \(p'\)-group acting faithfully on \(E\). Let \(G=EH\), and write $\pi_0 = \pi (\fit H)$. 
\begin{enumeratei}
\item Let $q$, $r$, $s$ be distinct prime divisors of $|H|$ such that $\{q, s\},\{s,r\} \in \cE(G)$, and $q\in \pi_0$. Then $s\not\in \pi_0$ and $r \in \pi_0$. Moreover, \(H\) has both a  normal abelian Sylow \(q\)-subgroup and Sylow \(r\)-subgroup.
\item Let $p_1, p_2, p_3, p_4$ be distinct prime divisors of $|H|$ such that $\{ p_i, p_{i+1}\} \in \cE(G)$ for \(i\in\{1,2,3\}\), and $p_1\in \pi_0$. Then $\{p_1, p_4\} \in \cE(G)$.
\end{enumeratei}
\label{alternanza}
\end{lemma}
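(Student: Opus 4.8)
The plan is to argue by contradiction, after two applications of part~(a). First I would apply (a) to the triple $(p_1,p_2,p_3)$: this yields $p_2\notin\pi_0$, $p_3\in\pi_0$, and that $H$ has abelian normal Sylow $p_1$- and $p_3$-subgroups, say $P_1$ and $P_3$. Applying (a) to $(p_4,p_3,p_2)$ then forces $p_4\notin\pi_0$ as well (otherwise it would give $p_3\notin\pi_0$). For later use I also note that, since $\fit G=E$ (as $H$ is faithful on $E$) and hence $\fitd G/\fit G\cong\fit H$, Lemma~\ref{brodkey} produces $\chi_0\in\irr G$ with $\pi_0\subseteq\pi(\chi_0(1))$, so $p_1$ and $p_3$ are adjacent in $\Delta(G)$. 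Now assume, for a contradiction, that $\{p_1,p_4\}\notin\cE(G)$, and fix $\chi\in\irr G$ with $p_1p_4\mid\chi(1)$.

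Next I would extract structure from the Key Lemma. For $i\in\{1,3\}$ apply Lemma~\ref{KeyLemma} to the non-adjacency $\{p_1,p_2\}$ (resp.\ $\{p_3,p_4\}$), rooted at $p_i\in\pi_0$, and put $U_i=[E,P_i]$ — which by conclusion~(a) is the module $[E,Q]$ produced there, since $Q$ is the (unique, hence equal to $P_i$) Sylow $p_i$-subgroup of $\fit H$ — together with $W_i=\cent E{P_i}$, $K_i=\cent H{U_i}$ and $X_i/K_i=\fit{H/K_i}$. From conclusions (b)–(c) and the analysis in the proof of~(d): $H/K_i$ embeds into $\Gamma(U_i)$; the primitive prime divisor $t_i$ of $(p,n_i)$ (where $|U_i|=p^{n_i}$) that divides $|\fit{L_i}|$ forces $X_i/K_i$ to be cyclic and contained in $\fit{\Gamma(U_i)}$, the group of field multiplications, which acts regularly on $\irr{U_i}\setminus\{1\}$. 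Moreover $p_i\mid|X_i/K_i|$: since $p_i\in\pi_0$ we have $P_i\le\fit H$, so the image of $P_i$ in $H/K_i$ is a normal nilpotent subgroup, hence contained in $X_i/K_i$, and it is nontrivial because $P_i$ acts nontrivially on $U_i=[E,P_i]$.

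The heart of the argument would be the following principle: \emph{every $\eta\in\irr G$ with $p_i\mid\eta(1)$ has $|X_i/K_i|$ dividing $\eta(1)$.} Indeed $EP_i\nor G$, $U_i\nor G$, and $EP_i/U_i\cong P_i\times W_i$ is abelian and normal in $G/U_i$ and contains a Sylow $p_i$-subgroup of $G/U_i$; hence $G/U_i$ has an abelian normal Sylow $p_i$-subgroup and $p_i\notin\V{G/U_i}$, so $\eta$ must lie over some $1\neq\lambda\in\irr{U_i}$. Then $|G:I_G(\lambda)|=|H:I_H(\lambda)|$ divides $\eta(1)$, and since $K_i\le I_H(\lambda)$ while $X_i/K_i$ meets $I_H(\lambda)/K_i$ trivially (by regularity of the field multiplications on $\irr{U_i}\setminus\{1\}$), a count of the $H/K_i$-orbit of $\lambda$ shows that $|X_i/K_i|$ divides $|H:I_H(\lambda)|$, hence $\eta(1)$.

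To conclude: applying the principle to $\chi$ with $i=1$ gives $|X_1/K_1|\mid\chi(1)$. If $P_3$ acts nontrivially on $U_1$, then $p_3\mid|X_1/K_1|$ (same reasoning as for $p_i$ above, since the image of $P_3$ in $H/K_1$ lies in $X_1/K_1$), so $p_3\mid\chi(1)$; with $p_4\mid\chi(1)$ this makes $p_3$ adjacent to $p_4$, contradicting $\{p_3,p_4\}\in\cE(G)$. If instead $\chi$ is nontrivial on $U_3$, the principle with $i=3$ gives $|X_3/K_3|\mid\chi(1)$, and since $p_3\mid|X_3/K_3|$ (as $P_3$ acts nontrivially on $U_3=[E,P_3]$) we again obtain $p_3\mid\chi(1)$ and the same contradiction. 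This leaves the residual case in which $P_3$ centralizes $U_1$ \emph{and} $U_3\le\ker\chi$: then $U_1\le\cent E{P_3}=W_3$, and comparing with $E=U_3\oplus W_3$ forces $P_1$ to centralize $U_3$ too, so that $E=U_1\oplus U_3\oplus V$ with $P_1$ fixed-point-free on $U_1$ and trivial on $U_3\oplus V$, $P_3$ fixed-point-free on $U_3$ and trivial on $U_1\oplus V$, and (as $U_3=[E,(P_3S_4)^H]$ and $(P_3S_4)^H$ centralizes $W_3$) a Sylow $p_4$-subgroup $S_4$ of $H$ centralizes $U_1\oplus V$. Here I would derive the contradiction by restricting $\chi$ to $E\gen{P_1,S_4}$ — on whose relevant section $U_1\rtimes P_1$ is a Frobenius factor while $S_4$ acts only through $U_3\le\ker\chi$ — or equivalently by passing to $G/U_3$ (where $p_3$ is not a vertex, $S_4$ centralizes $E/U_3$, and the orbit argument runs again with $P_1$ fixed-point-free on the image of $U_1$), thereby ruling out $p_4\mid\chi(1)$. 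I expect this residual analysis, together with the careful disposal of the degenerate "trivial action" subcases (impossible because faithfulness of $H$ on $E$ gives $[E,P_1],[E,P_3]\neq1$), to be the main technical hurdle.
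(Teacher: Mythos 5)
Your reduction via part~(a), the identification of $U_i=[E,P_i]$ with the module produced by Lemma~\ref{KeyLemma}, and the observation that $p_i\mid\chi(1)$ forces $\chi$ to lie over a nontrivial character of $U_i$ (because $G/U_i$ has a normal abelian Sylow $p_i$-subgroup) are all correct and close in spirit to the paper's argument, which likewise reduces everything to comparing $[E,P_1]$ with $[E,P_3]$. Two of your three cases are fine, and the $|X_i/K_i|$ machinery is more than you need there: since $U_1$ is an irreducible $H$-module and $P_3\nor H$, if $P_3$ acts nontrivially then no nontrivial $\lambda\in\irr{U_1}$ is $P_3$-fixed, so $p_3\mid|H:I_H(\lambda)|$ directly. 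As a side remark, the assertion that $X_i/K_i\le\fit{\Gamma(U_i)}=\{\text{field multiplications}\}$ is not justified as written: the Fitting subgroup of a subgroup need not lie in the Fitting subgroup of the overgroup, and $\fit{\Gamma(p^n)}$ can strictly contain the multiplicative group (e.g.\ $\Gamma(9)$ is semidihedral of order $16$, hence nilpotent). The containment you want does hold here, but only via an extra argument (coprime parts of a nilpotent group commute, and the centralizer in $\Gamma(U_i)$ of the irreducibly acting subgroup of order $t_i$ is the multiplicative group).

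The genuine gap is the residual case ($P_3$ centralizes $U_1$ and $U_3\le\ker\chi$), which is exactly the configuration the paper labels $U_1\neq U_3$. Your proposed resolution --- pass to $G/U_3$, note that $S_4$ centralizes $E/U_3$, and conclude $p_4\nmid\chi(1)$ --- does not work: since $p_4\notin\pi_0$, $S_4$ is not normal in $H$, so Ito--Michler gives $p_4\in\V H\subseteq\V{G/U_3}$. Concretely, writing $\chi(1)=|H:I_H(\lambda)|\cdot\xi(1)$ with $\lambda$ a constituent of $\chi_E$ and $\xi\in\irr{I_H(\lambda)}$ (note that $\lambda$ extends to $I_G(\lambda)$ since $E$ is abelian and complemented), all of $S_4$ fixes $\lambda$, so $p_4\nmid|H:I_H(\lambda)|$; but nothing in your argument prevents $p_4$ from dividing $\xi(1)$, and restricting to $E\gen{P_1,S_4}$ controls only the orbit part of the degree, not the inertia-quotient part. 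The hypothesis you never use at this point --- and which is precisely what closes this case in the paper --- is the non-adjacency of $p_2$ and $p_3$: in your residual configuration $L_1=(P_1S_2)^H$ and $L_3=(P_3S_4)^H$ centralize each other's modules, so $L_1\cap L_3=1$ and $U_1L_1\times U_3L_3$ is a section of $G$; it has an irreducible character whose degree is divisible by $p_2$ (from the factor $U_1L_1$, as $S_2$ is not normal there) and by $p_3$ (from $U_3L_3$, as $P_3$ acts fixed-point-freely on $U_3$), whence $\{p_2,p_3\}\in\E G$, a contradiction. With that replacement your argument goes through.
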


\begin{proof} As it regards (a), by Lemma~\ref{brodkey} the subgraph of $\Delta(G)$ induced by $\pi_0$ is complete, so the claim $s\not\in \pi_0$ is immediate from the assumption of nonadjacency between $q$ and $s$. The remaining claims of (a) follow from Lemma~\ref{KeyLemma}, therefore we only have to prove (b).


Let $p_1, p_2, p_3, p_4$ be as in the assumptions. Then (a) yields $\{p_1, p_3\}\subseteq \pi_0$, whereas $\{p_2, p_4\}\cap \pi_0=\emptyset$; actually, setting $P_i \in \syl {{p_i}}H$ for $i = 1, 2, 3, 4$, we have that \(P_1\) and \(P_3\) are abelian and normal in \(H\).

Write $L_1 = (P_1P_2)^H$, $L_3 = (P_3P_4)^H$, $L = L_1L_3$, $M=\cent EL$ and $V = E/M$. Also, let $U_1 = [ V, P_1]$ and $U_3 = [V, P_3]$. Then, clearly,  $V = U_1U_3$, and, again by Lemma~\ref{KeyLemma}, we have that $U_1 = [V, L_1]$ and $\ U_3 = [V,L_3]$ are irreducible $L$-modules, hence irreducible $H$-modules.

Let us assume $U_1\ne U_3$; then $V = U_1\times U_3$, $U_1 = \cent V{L_3}$ and $U_3 = \cent V{L_1}$. Thus we get $L_1\cap L_3 =1$, whence $L = L_1\times L_3$ and  $VL = U_1L_1\times U_3L_3$. As $p_2$ and $p_3$ divide, respectively, the degree of a character of $U_1L_1$ and of $U_3L_3$, we get that \(p_2\) and \(p_3\) are adjacent in the graph $\Delta (VL)$, which is a contradiction because $VL$ is isomorphic to a section of $G$.

Thus we have $U_1=U_3$. Setting \(U=[E,P_1]\), we now have $E = U\times M$.
So, $[E,P_1]= [E,P_3]$ and $p_1\cdot p_3$ divides $|H:I_H(\lambda)|$ for every nonprincipal $\lambda \in {\rm Irr}(U)$. 
Observe also that $U$ is complemented in $G$ by $MH$. 
Let \(\chi\) be an irreducible character of \(G\) whose degree is divisible by \(p_1\). Since $P_1$ is an abelian normal Sylow subgroup of $H$, then $MP_1$ is an abelian normal  subgroup of $MH\simeq G/U$, and hence the kernel of \(\chi\) cannot contain \(U\). Therefore, denoting by \(\lambda\) a (nonprincipal) irreducible constituent of \(\chi_U\), the degree of \(\chi\) is divisible by \(|G:I_G(\lambda)|=|H:I_H(\lambda)|\), thus by \(p_1\cdot p_3\). As \(p_4\) is not adjacent to \(p_3\) in \(\Delta(G)\), we conclude that \(p_4\) does not divide \(\chi(1)\). But this holds for every \(\chi\in\irr G\) such that \(p_1\mid\chi(1)\), hence $\{p_1, p_4\} \in \cE(G)$, as desired.
\end{proof}

\begin{lemma}
Let \(G\) be a solvable group, and \(p\) a prime in \(\V G\). Assume that \(P=\fit G\) is a  Sylow \(p\)-subgroup of \(G\) with \(P' \neq 1\) minimal normal in \(G\) and, denoting by \(H\) a complement for \(P\) in \(G\), define \(\pi_0=\pi(\fit H)\). Assume further that \(p_0=p\), \(p_1\), \(p_2\) and \(p_3\) are distinct primes in \(\V G\) (except possibly \(p_0=p_3\)) such that, for \(i\in\{0,1,2\}\), 
$\{p_i, p_{i+1}\} \in \cE(G)$. If \(\{p_1,p_2\}\cap\pi_0=\emptyset\), then \(p \neq p_3\) and 
$\{p, p_3\} \in \cE(G)$.
\label{pPartenza}
\end{lemma}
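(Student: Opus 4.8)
The plan is to reduce to a situation where Lemma~\ref{alternanza} can be applied and to control the contribution of the prime $p$ separately. First I would deal with the degenerate case $p_0 = p_3$: here the hypothesis says we have a path $p = p_0, p_1, p_2, p_3 = p$ in $\bar\Delta(G)$, i.e.\ a cycle of length $3$, and one shows directly that this cannot happen by combining the structural information on $G$ (that $P = \fit G$ is a Sylow $p$-subgroup with $P'$ minimal normal) with the nonadjacency relations, essentially reproving P\'alfy's theorem in this special configuration. So from now on assume the four primes are pairwise distinct. Since $P' \neq 1$ is minimal normal in $G$ and $P$ is a Sylow $p$-subgroup, for every $\lambda \in \irr{P'}$ with $\lambda \neq 1_{P'}$ the prime $p$ divides $\chi(1)$ for all $\chi \in \irr{G \mid \lambda}$; this is the mechanism by which ``$p$ being adjacent to something'' translates into the Clifford-theoretic statement that the relevant inertia subgroup contains an abelian normal Sylow subgroup.

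Next I would pass to a quotient that kills the ``$p$-part'' in a controlled way. Let $M = P'$, and consider $V = P/\Phi(P)$ as an $H$-module over $\GF p$; after replacing $G$ by a suitable quotient one arranges that $H$ acts faithfully on an elementary abelian $p$-group $E$ (a chief factor of $P$ lying above $M$, or $V$ itself), putting us in the setup of Lemma~\ref{alternanza} with the group $EH$. The key point is that the nonadjacencies $\{p_1,p_2\}, \{p_2,p_3\} \in \cE(G)$ survive in $\Delta(EH)$ (this needs the coprimality $(|E|,|H|)=1$ together with the observation that $EH$ is a section of $G$, or rather that the relevant edge/non-edge data transfers), and that the hypothesis $\{p_1,p_2\} \cap \pi_0 = \emptyset$ is exactly what is available. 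I would then want to invoke Lemma~\ref{alternanza}(a) or an argument parallel to it: from $\{p_2,p_3\} \in \cE(G)$ and $\{p_1, p_2\} \in \cE(G)$ with $p_1, p_2 \notin \pi_0$, deduce structural constraints on $P_3 \in \syl{p_3}{H}$, in particular that $H$ has an abelian normal Sylow $p_3$-subgroup if $p_3 \notin \pi_0$, or that $p_3 \in \pi_0$. Actually the cleaner route: since $\{p,p_1\} \in \cE(G)$ and $p$ plays the role of the ``$q \in \pi_0$'' vertex (note $p \in \pi(\fit G) \cap \V G$ but $p$ is \emph{not} a divisor of $|H|$, so one must be careful — $\pi_0 = \pi(\fit H)$ does not contain $p$), I'd instead run the inertia-subgroup argument directly with the module $E$ over $\GF p$ and the character $\lambda$ of $M \leq E$.

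The core computation is then the following. Pick $\chi \in \irr G$ with $p \mid \chi(1)$; I claim $p_3 \nmid \chi(1)$, which gives $\{p, p_3\} \in \cE(G)$. Since $P'=M$ is minimal normal and $P$ is Sylow, $\ker\chi$ does not contain $M$, so $\chi$ lies over a nonprincipal $\lambda \in \irr{M}$ (or over a nonprincipal character of $V=P/\Phi(P)$). Now I use $\{p_2,p_3\}, \{p_1,p_2\} \in \cE(G)$ and $p_1,p_2 \notin \pi_0$ to force, via Lemma~\ref{KeyLemma} applied to the non-edge $\{p_1,p_2\}$ (with $s = p_1$, $r$-type conclusions for $p_2$) or to $\{p_2,p_3\}$, that the $H$-module structure relevant to $\lambda$ has its $[\,\cdot\,, P_2]$ and $[\,\cdot\,,P_1]$ parts linked in a way that makes $|H : I_H(\lambda)|$ divisible by $p_2$ whenever it is divisible by $p_1$; combined with $\{p_2,p_3\} \in \cE(G)$, $p_3 \nmid \chi(1)$. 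I would mirror the last paragraph of the proof of Lemma~\ref{alternanza}(b) almost verbatim: $U = [E,\text{(relevant subgroup)}]$ is complemented, $\chi$ lies over a nonprincipal $\lambda \in \irr U$, $|H:I_H(\lambda)|$ is divisible by the product of two of the $p_i$, and nonadjacency of $p_3$ with the other one finishes it.

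The main obstacle I expect is the bookkeeping around the prime $p$ itself: unlike in Lemma~\ref{alternanza}, here $p$ is \emph{not} a prime dividing $|H|$ — it is the defining prime of the module $E = P/\Phi(P)$ — so the role played by ``$q \in \pi_0$'' in Lemma~\ref{alternanza}(a) is now played by the fact that $P' = M$ is minimal normal inside $\fit G = P$. Getting the reduction to the faithful-module situation $EH$ right (in particular making sure the hypothesis ``$P = \fit G$ Sylow with $P'$ minimal normal'' is not destroyed, and that $\pi_0$ and the non-edges are preserved), and then correctly identifying which non-edge of $\{p_1,p_2\}$, $\{p_2,p_3\}$ to feed into Lemma~\ref{KeyLemma} to get the ``$[E,P_1]=[E,P_2]$-type'' coincidence, is the delicate part; once that coincidence is in hand, the final divisibility argument is routine and modelled on Lemma~\ref{alternanza}(b).
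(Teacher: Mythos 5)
Your opening move is right (every $\chi\in\irr G$ lying over a nontrivial $\lambda\in\irr{M}$, $M=P'$, has degree divisible by $p$ because the corresponding character of $P$ is nonlinear), but the rest of the plan has gaps I do not see how to repair as stated. (1) Lemma~\ref{KeyLemma} cannot be fed the non-edge $\{p_1,p_2\}$: that lemma requires $\fit G$ abelian and, crucially, one of the two primes to lie in $\pi(\fit H)$, whereas the hypothesis here is precisely that neither $p_1$ nor $p_2$ lies in $\pi_0$. (2) The reduction to the faithful elementary-abelian setting of Lemma~\ref{alternanza} loses exactly what the conclusion is about: in $G/\frat{P}$ (or on a chief factor above $M$) the Sylow $p$-subgroup becomes abelian normal, so $p$ leaves the vertex set and $\{p,p_3\}\in\cE(G)$ cannot be read off the quotient; moreover $M=P'\le\frat P$ is not a subgroup of such an $E$, so ``the character $\lambda$ of $M\le E$'' does not parse. (3) The proposed mechanism ``$|H:I_H(\lambda)|$ divisible by $p_2$ whenever it is divisible by $p_1$'' runs backwards: the non-edge $\{p,p_1\}$ forces $I_H(\lambda)$ to contain a unique abelian Sylow $p_1$-subgroup of $H$ for every nontrivial $\lambda\in\irr M$, so $p_1$ \emph{never} divides $|H:I_H(\lambda)|$. (4) The case $p_0=p_3$ cannot be dispatched by ``reproving P\'alfy's theorem'': in this paper P\'alfy's theorem is itself deduced from this lemma (the $\ell=3$ part of the proof of Theorem~A begins by invoking Lemma~\ref{pPartenza} with $p_0=p_3$), so that step is circular; and the correct claim in that case is not that the configuration is impossible but that it forces $\{p_1,p_2\}\cap\pi_0\neq\emptyset$.

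For comparison, the paper argues by contraposition, uniformly in both cases: assume $p_1\notin\pi_0$ and that $p=p_3$ or $p$ is adjacent to $p_3$, and deduce $p_2\in\pi_0$. The work happens around $K=\cent HM$ and the $H$-module $\widehat M=\irr M$: the unique abelian Sylow $p_1$-subgroup $P_1\le I_H(\lambda)$ together with $p_1\notin\pi_0$ gives $P_1\le\cent HK$ and $H/K\le\Gamma(\widehat M)$ with $p_1$ dividing the order of the cyclic top quotient $H/X$, where $X/K=\fit{H/K}$; then $p_2\nmid|H/X|$ (clique), and $p_2\nmid|X/K|$ because otherwise every degree over a nontrivial $\lambda$ — in particular the degree of a $\chi$ with $p\cdot p_3\mid\chi(1)$ — would be divisible by $p_2\cdot p_3$; hence $P_2\le K$, and a final argument with $Z=\zent K$ and $L=P_1^HZ$, using the non-edge $\{p_1,p_2\}$, forces $P_2\le\fit H$. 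None of these steps (the subgroup $K$, the trichotomy for $p_2$, the closing $Z$–$L$ argument) appears in your outline, and they are where the actual content of the lemma lies.
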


\begin{proof}
Assuming that \(p_1\) is not in \(\pi_0\), and that either \(p=p_3\) or \(p\) is adjacent to \(p_3\) in \(\Delta(G)\), we will show  \(p_2\in\pi_0\).

Setting \(M=P'\), let \(\widehat{M}\) denote the dual group \(\irr M\), and let \(\lambda\) be a nontrivial element in \(\widehat{M}\). For any \(\tau\in\irr{P\mid\lambda}\) we get \(I_H(\tau)\leq I_H(\lambda)\), because \(M\leq\zent P\) and so \(\tau_M\) is a multiple of \(\lambda\); but, by \cite[13.28]{Is}, there exists \(\theta\in\irr{P\mid\lambda}\) for which in fact equality holds. Now, \[\cd{G\mid\theta}=\{|H:I_H(\lambda)|\cdot\theta(1)\cdot\xi(1)\;:\;\xi\in\irr{I_G(\lambda)/P}\},\] thus, the nonadjacency between \(p\) and \(p_1\) forces \(I_H(\lambda)\simeq I_G(\lambda)/P\) to contain a unique abelian Sylow \(p_1\)-subgroup of \(H\). 
Set \(K=\cent H M\) and   note  that, for \(\lambda\in\widehat{M}\setminus\{1_M\}\), \(K\) is a normal subgroup of \(I_H(\lambda)\). Denoting by \(P_1\) the unique Sylow \(p_1\)-subgroup of \(H\) contained in \(I_H(\lambda)\), we have that \(P_1\cap K\) is a normal Sylow \(p_1\)-subgroup of \(K\), whence it lies in \(\fit H\); but we are assuming \(p_1\not\in\pi_0\), thus \(p_1\nmid |K|\) and \(P_1\leq\cent H K\). As then $p_1$ divides $|H/K|$, an application of Lemma 3.6 in \cite{CDPS} yields that \(H/K\) can be identified with a subgroup of the semilinear group \(\Gamma(\widehat{M})\).  Finally, setting \(X/K=\fit{H/K}\), the prime \(p_1\) divides the order of the cyclic group \(|H/X|\); in fact, if we assume the contrary, then \(P_1K=P_1\times K\) is normal in \(H\), and therefore \(P_1\leq\fit H\) against our assumptions. Observe that the prime divisors of \(|H/X|\) constitute a clique in \(\Delta(G)\); in particular, \(p_2\nmid|H/X|\).

Assume now that \(p_2\) is a divisor of \(|X/K|\). Hence, if \(P_2\) is a Sylow \(p_2\)-subgroup of \(H\), we get \(K<P_2K\unlhd H\) and \([M,P_2K]=M\). It follows that, for every \(\lambda\in\widehat M\setminus\{1_M\}\), \(p_2\) divides \(|H:I_H(\lambda)|\). In particular, \(p_2\) divides the degree of every irreducible character of \(G\) whose kernel does not contain \(M\). Now, there exists \(\chi\in\irr G\) such that \(\{p,p_3\}\subseteq\pi(\chi(1))\), and the kernel of such a \(\chi\) clearly does not contain \(M\). As a consequence, \(p_2\cdot p_3\) divides \(\chi(1)\), a contradiction. Our conclusion so far is that \(P_2\leq K\).

Consider now the normal closure \(P_1^H\) of \(P_1\) in \(H\), set \(Z=\zent K\) and \(L=P_1^HZ\unlhd H\). Recalling that \(P_1\leq\cent H K\unlhd H\), we get \(L\leq\cent H K\) as well, and so \(K\cap L=Z\).   In other words, \(KL/Z=(K/Z)\times(L/Z)\). Observe that \(p_1\) lies in \(\V{L/Z}\), as otherwise we would have \(P_1Z=P_1\times Z\unlhd L\), whence \(P_1\leq\fit H\) against our assumptions. Now, the nonadjacency between \(p_1\) and \(p_2\) in \(\Delta(G)\) forces \(p_2\not\in \V{K/Z}\). Therefore we get \(P_2Z/Z\unlhd H/Z\). But \(P_2Z\) is nilpotent, and we conclude that \(P_2\leq \fit H\), as desired. 
\end{proof}

We are now ready to prove Theorem A, that we state again.

\begin{theorem} Let \(G\) be a solvable group. Then the graph \(\bar{\Delta}(G)\) does not have any cycle of odd length. 
\end{theorem}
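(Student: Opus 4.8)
The plan is to argue by contradiction, assuming that $\bar\Delta(G)$ contains an odd cycle, and to take $G$ to be a counterexample of minimal order; by standard reductions we may assume that for every proper factor group $\overline G$ of $G$ one has $\V{\overline G}\neq\V G$ (otherwise $\overline G$ would be a smaller counterexample, since $\Delta(\overline G)$ is a subgraph of $\Delta(G)$ on the same vertex set). Fix a shortest odd cycle $C$ in $\bar\Delta(G)$, say of length $2k+1$, and let $p=p_0$ be one of its vertices. Since $p\in\V G$, there is a minimal normal subgroup $M$ of $G$ with $p\in\V G\setminus\V{G/M}$; apply Lemma~\ref{Riduzione} to obtain a normal subgroup $W$ with $\pi_p\subseteq\V W$ and with $\fit W$ complemented in $G$ and equal either to $P\in\syl pG$ with $P'=M$, or to $M$ itself with $M\cap\frat G=1$. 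The whole cycle $C$ lies in the connected component $\pi_p$ of $p$ in $\bar\Delta(G)$, hence in $\V W$, and $\bar\Delta(W)$ is a subgraph of $\bar\Delta(G)$; by minimality we may therefore replace $G$ by $W$ and assume $G=W$, so that $\fit G$ is complemented, $G=FH$ with $F=\fit G$, and $F$ is either an elementary abelian $p$-group (the case $\fit W=M$) or a Sylow $p$-subgroup $P$ with $P'=M$ minimal normal.

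In the first case, $F=E$ is elementary abelian and $H$ acts faithfully on it (kill the kernel, which does not change $\V G$ by minimality), so we are exactly in the setting of Lemma~\ref{alternanza}. Here is the key point: in an odd cycle $C=(p_0,p_1,\dots,p_{2k}=p_0)$ one cannot two-colour the vertices alternately, so Lemma~\ref{alternanza}(a) — which forces the vertices of $\bar\Delta(G)$ along any path to alternate between $\pi_0=\pi(\fit H)$ and its complement, \emph{provided one endpoint lies in $\pi_0$} — gives a contradiction as soon as some vertex of $C$ lies in $\pi_0$. Concretely, if $p_1\in\pi_0$ (after relabelling) then walking around $C$ and repeatedly invoking part~(a) forces $p_i\in\pi_0$ for $i$ odd and $p_i\notin\pi_0$ for $i$ even; but the cycle has odd length, so $p_0$ is both in and not in $\pi_0$. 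Hence no vertex of $C$ lies in $\pi_0$. One then uses Lemma~\ref{alternanza}(b), which says that if a path of length $3$ has its initial vertex in $\pi_0$ then its endpoints are nonadjacent: this is the tool that lets one ``shortcut'' the cycle to produce a shorter odd cycle, contradicting minimality of $|C|$ — but this needs a vertex in $\pi_0$, so one has to feed in the prime $p$ of $\fit G$ somehow. This is exactly where the second case and Lemma~\ref{pPartenza} come in.

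In the second case $\fit G=P$ is a Sylow $p$-subgroup with $P'=M$ minimal normal, $H$ is a complement, $\pi_0=\pi(\fit H)$, and $p\in\pi_0$ (indeed $p\in\pi(\fit G)$, but more to the point $p$ plays the role of the ``anchor'' vertex). Now $C=(p_0,p_1,\dots,p_{2k}=p_0)$ is our shortest odd cycle with $p_0=p$. If $\{p_1,p_2\}\cap\pi_0=\emptyset$ then Lemma~\ref{pPartenza} (applied to $p_0=p,p_1,p_2,p_3$) gives $p\neq p_3$ and $\{p,p_3\}\in\cE(G)$, i.e.\ the chord $p_0p_3$ is an edge of $\bar\Delta(G)$; this replaces the cycle $C$ by the shorter cycle $(p_0,p_3,p_4,\dots,p_{2k}=p_0)$, which has length $2k-2$, still odd, contradicting minimality. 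So we may assume that at least one of $p_1,p_2$ lies in $\pi_0$; but then, using Lemma~\ref{alternanza}(a) as in the previous paragraph (now legitimately, since we have a vertex of $C$ in $\pi_0$), the alternation argument around the odd cycle again yields a contradiction. The only remaining configuration is that the cycle has length $3$ and the reductions collapse; this is precisely P\'alfy's theorem, and the argument of Lemma~\ref{pPartenza} together with Lemma~\ref{alternanza} handles it directly (this is the ``last two paragraphs'' the authors refer to in the introduction).

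\textbf{Main obstacle.} The delicate part is the bookkeeping that guarantees, after applying Lemma~\ref{Riduzione} and reducing to $G=W$, that we land in \emph{one of the two clean normal forms} with $\fit G$ complemented, and that the relevant primes of the cycle survive the reduction (they do, because $\pi_p\subseteq\V W$ and $\bar\Delta(W)\subseteq\bar\Delta(G)$, but one must check the reduction does not shorten or destroy the cycle). The second, and real, obstacle is forcing a vertex of the cycle into $\pi_0=\pi(\fit H)$: the elementary-abelian case (Lemma~\ref{alternanza}) alone cannot start the induction because it \emph{assumes} such a vertex, so one genuinely needs the $P'=M$ case and Lemma~\ref{pPartenza} to inject the prime $p$ as an anchor, and then to combine the ``chord'' mechanism of Lemmas~\ref{alternanza}(b) and~\ref{pPartenza} with the ``parity obstruction'' of Lemma~\ref{alternanza}(a). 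Marrying these two mechanisms on the \emph{same} minimal odd cycle, across the two structural cases, is the crux.
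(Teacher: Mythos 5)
Your overall architecture matches the paper's: minimal counterexample, shortest odd cycle, reduction via Lemma~\ref{Riduzione} to the two structural cases, and the interplay of Lemmas~\ref{alternanza} and~\ref{pPartenza}. But there are three genuine gaps. First, in the case where $F=\fit G$ is minimal normal you invoke Lemma~\ref{alternanza}, but its hypotheses are not satisfied there: $F$ is an elementary abelian $r$-group for some prime $r$ that need not be $p$, and $H\simeq G/F$ need not be an $r'$-group, so the coprimality built into Lemma~\ref{alternanza} (via Lemma~\ref{KeyLemma}) can fail. Moreover, even granting the lemma, you end that case with ``no vertex of $C$ lies in $\pi_0$'' and no contradiction; in fact $p_0$ itself lies in $\pi_0$ in this case (its Sylow subgroup in $H$ is abelian normal since $p_0\notin\V{G/M}$), and the paper's actual argument is different: it embeds $H$ into a semilinear group so that \emph{both} $\pi_0$ and $\pi(H)\setminus\pi_0$ are cliques, whence consecutive cycle vertices must strictly alternate, which is impossible for odd length. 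Second, in the Sylow case your claim that ``the alternation argument around the odd cycle again yields a contradiction'' does not work as stated: $p_0=p$ does not divide $|H|$, so Lemma~\ref{alternanza}(a) cannot be applied across $p_0$ and the alternation cannot close up around the cycle. The contradiction must instead come from Lemma~\ref{alternanza}(b) producing a chord ($\{p_1,p_4\}$ or $\{p_{\ell-4},p_{\ell-1}\}$ in $\cE(G)$) in a chordless shortest cycle; you gesture at this mechanism but do not actually deploy it. (Also, the shortcut cycle you build from $\{p,p_3\}\in\cE(G)$ has length $2k-1$, not $2k-2$.)

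The third and most serious gap is $\ell=3$. You dismiss it with ``Lemma~\ref{pPartenza} together with Lemma~\ref{alternanza} handles it directly,'' but neither lemma closes this case: Lemma~\ref{pPartenza} only forces one of $p_1,p_2$ into $\pi_0$, and Lemma~\ref{alternanza} needs a third prime of $|H|$ nonadjacent to $p_2$, which a triangle through $p_0=p\notin\pi(H)$ does not supply. The paper spends its final two paragraphs on a genuinely new argument here: reducing to $F=[F,P_1]$ with $F'=\cent F{P_1}$ via \cite[Theorem 19.3]{MW} and Lemma~\ref{KeyLemma}, invoking \cite[Lemma 3.8]{CDPS} to get $|F/F'|=|F'|$, and then deriving a contradiction from the commutator map $f\mapsto[f,x]$ and a projective/extension argument showing $\Psi(x)$ is scalar. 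None of this is recoverable from the lemmas you cite, so as written the proposal re-proves neither P\'alfy's theorem nor the base case of the induction it needs.
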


\begin{proof}
Let \(G\) be a counterexample of minimal order to the statement, and let \(\ell\) be the smallest odd number for which a cycle \(\mathcal{C}\) of length \(\ell\) can be found in \(\bar{\Delta}(G)\). 
Let \(\{p_0,p_1,...,p_{\ell-1}\}\) be the set of distinct vertices lying in \(\mathcal{C}\), where we assume $\{p_{\ell-1}, p_0\}, \{p_i, p_{i+1}\} \in \cE(G)$ for every \(i\in\{0,...,\ell-2\}\); 
of course, by our choice of \(\ell\), these are the only edges in  $\bar{\Delta}(G)$ between vertices in \(\mathcal{C}\).

We start by observing that, for every proper factor group $\overline{G}$  of \(G\), some vertex of \(\mathcal{C}\) does not belong to  \( \V{\overline{G}}\), for otherwise \(\mathcal{C}\) would be a cycle of odd length in \(\bar{\Delta}(\overline{G})\), against the minimality of \(G\). 
Let $M$ be a minimal normal subgroup of $G$ and let $p = p_0 \not\in \V{G/M}$. 

By Lemma~\ref{Riduzione} and the minimality of $G$, either \(F=\fit G\) is a nonabelian Sylow $p$-subgroup of \(G\) with \(F'\) minimal normal in \(G\), or \(F\) is itself minimal normal in \(G\). In any case, \(F\) has a complement \(H\) in \(G\); in what follows, we will denote by \(\pi_0\) the set of prime divisors of \(\fit H\).

Let us first consider the case when \(F\) is a minimal normal subgroup of \(G\). 
We clearly have \(\{p_0,...,p_{\ell-1}\} \subseteq\pi(H)\). Also, if \(P\) is the abelian normal Sylow \(p\)-subgroup of \(H\), we get \(\cent F P=1\). In other words, \(p\) divides \(|H:I_H(\lambda)|\) for every \(\lambda\in\irr F\setminus\{1_F\}\). As a consequence, we have that \(I_H(\lambda)\) contains a unique Sylow \(p_1\)-subgroup of \(H\) for every \(\lambda\in\irr F\setminus\{1_F\}\). Now, by \cite[Lemma~3.6]{CDPS}, \(H\) can be identified with a group of semilinear maps on the dual group of \(F\) 
(note that, as $|\pi(H)| > 2$, $H \not\leq \rm{GL}_2(3)$). 
In particular, \(H/\fit H\) is abelian, and therefore both \(\pi_0\) and \(\pi(H)\setminus \pi_0\) induce complete subgraphs in \(\Delta(G)\). It follows that two consecutive vertices of the cycle \(\mathcal C\) cannot lie both in \(\pi_0\) nor both in \(\pi(H)\setminus\pi_
 0\), and this is clearly impossible because \(\ell\) is odd.

Our conclusion so far is that \(F\) is a nonabelian Sylow \(p\)-subgroup of \(G\), hence we are in the setting of Lemma~\ref{pPartenza}. Let us assume first \(\ell\geq 5\). Since $\{p, p_3\} \not\in \cE(G)$, we conclude that one among \(p_1\) and \(p_2\) lies in \(\pi_0\). Now, adopting the bar convention for \(\overline G=G/\frat G\), we have that \(\overline F\) is an elementary abelian \(p\)-group, and \({\overline H}\simeq H\) is a \(p'\)-group acting faithfully (by conjugation) on \(\overline F\). Therefore, we can apply Lemma~\ref{alternanza}. Observe that \(p_1,...,p_{\ell-1}\) all lie in \(\V{\overline G}\cap \pi(\overline H)\), and they are of course consecutive vertices of a path in \(\bar{\Delta}(\overline G)\). If \(p_1\in\pi_0\), then Lemma~\ref{alternanza}(b) yields that 
$\{p_1, p_4\}  \in \cE(G)$, a contradiction. On the other hand, if \(p_1\) does not lie in \(\pi_0\), then we have seen that \(p_2\)
  does, and so does \(p_{\ell-1}\) by an iterated application of Lemma~\ref{alternanza}(a). But again Lemma~\ref{alternanza}(b) yields now  $\{p_{\ell-4}, p_{\ell-1}\} \in \cE(G)$, the final contradiction for the case \(\ell\geq 5\).

It remains to treat the situation when \(\ell=3\). In view of Lemma~\ref{pPartenza}, we may assume that \(p_1\) is a divisor of $|\fit H|$ (thus \(p_2\) is not). Set $P_1=\oh{p_1}H$, \(P_2\in{\rm Syl}_{p_2}(H)\), and define \(L=(P_1P_2)^H\). We first observe that $P_1$ centralizes $F'$ and, for every \(\lambda\in\irr{F'}\setminus\{1_{F'}\}\), a unique \(H\)-conjugate of \(P_2\) lies in \(I_H(\lambda)\). In fact, arguing as in the second paragraph of the proof of Lemma~\ref{pPartenza}, we see that \(|H:I_H(\lambda)|\) is coprime with \(p_1\cdot p_2\) and \(p_2\not\in \V{I_H(\lambda)}\), for every \(\lambda\in\irr{F'}\setminus\{1_{F'}\}\). Note also that, for similar reasons, \(P_1\) fixes all the nonlinear irreducible characters of \(F\); therefore, setting \(U=[F,P_1]\), Theorem~19.3(a) in \cite{MW} yields \(U'=F'\). Now, by Lemma~\ref{KeyLemma}, we get \(U=[F,L]\), and \(U/F'\) is a faithful irreducible \(L\)-module. Also, since \(U=\fit{UL}\) is nonabelian, we get  \(\{p_0
 ,p_1,p_2\}\subseteq \V{UL}\); but \(UL\) is normal in \(G\), thus the minimality of \(G\) forces \(F=U\) (so \(F'=\cent F {P_1}\)) and \(L=H\). Finally, observe that no nonprincipal irreducible character of \(F/F'=[F/F',P_1]\) is fixed by \(P_1\); therefore, in view of the nonadjacency between \(p_1\) and \(p_2\) in \(\Delta(G)\), for every nonprincipal \(\mu\in\irr{F/F'}\), a unique \(H\)-conjugate of \(P_2\) lies in \(I_H(\mu)\). 

We are then in a position to apply Lemma~3.8 in \cite{CDPS}, obtaining that $|F/F'| = |F'|$. Let \(x\) be an element in \(F\setminus F'\), and consider the map \(f\mapsto[f,x]\) from \(F\) to \(F'\); recalling that \(F'\leq\zent F\), this defines a group homomorphism whose kernel strictily contains \(F'\). As a consequence, \(|[F,x]|\) is strictly smaller than \(|F/F'|=|F'|\) and we conclude that $[F, x]$ is properly contained in $F'$. Let $K$ be a maximal 
subgroup of $F'$ containing $[F, x]$, so $K \unlhd FP_1$, and
take $\lambda \in \irr {F'}$ with $K = \ker{\lambda}$. Since \(P_1\) fixes no nontrivial element in \(F/F'\), we can choose an element $y \in P_1$ such that, setting  $z = [x,y]$, we have $z \not \in K$.  
As $\ker{\lambda^F} = K$, there exists $\theta \in \irr{F\mid\lambda}$ such that
$z \not\in \ker{\theta}$. 
As already mentioned, $\theta$ is $P_1$-invariant, and hence it extends to a character
$\psi \in \irr{FP_1}$. 
If $\Psi$ is a representation affording $\psi$, then $\Psi(x)$ is a scalar matrix; in fact, $\Psi_F$ affords $\theta$, and $x \in \zent{\theta}$ because $xK \in \zent{F/K}$. In particular, $\Psi(x)$ commutes with $\Psi(y)$. But now $\Psi(z)$ is the identity matrix and  $z$ lies in \(\ker{\psi_F} = \ker{\theta}$, the final contradiction that completes the proof. 
\end{proof}

\end{document}